\documentclass[11pt]{article}
\usepackage[margin=1.1in]{geometry}
\usepackage{amsmath,amssymb,amsthm}
\usepackage{enumitem}
\usepackage{tikz}
\usetikzlibrary{arrows.meta}
\usepackage[colorlinks=true,citecolor=blue,linkcolor=black,urlcolor=black]{hyperref}
\theoremstyle{definition}
\newtheorem{theorem}{Theorem}[section]
\newtheorem{lemma}[theorem]{Lemma}
\newtheorem{proposition}[theorem]{Proposition}
\newtheorem{conjecture}[theorem]{Conjecture}
\newtheorem{corollary}[theorem]{Corollary}
\newtheorem{observation}[theorem]{Observation}
\newtheorem{definition}[theorem]{Definition}
\newtheorem{remark}[theorem]{Remark}
\title{Connectivity of the reconfiguration graph of Hamiltonian paths}
\author{Albi Kazazi\\
\small Department of Mathematics and Statistics, York University\\
\small \texttt{ak87@yorku.ca}}
\date{September 24, 2026}
\begin{document}
\maketitle

\begin{abstract}
A backbite on a Hamiltonian path of a graph $G$ adds an edge of $G$ from an endpoint $u$
of the path to a vertex $v$ and deletes an edge of the path at $v$ so that the result is
again a Hamiltonian path. We study the connectivity of the reconfiguration graph
$R(G)$, whose vertices are the Hamiltonian paths of $G$, with two adjacent when a single
backbite carries one to the other. We prove that $R(G)$ is connected whenever $G$ contains the
square of a Hamiltonian path as a spanning subgraph, and that any two Hamiltonian
paths can be reconfigured using $O(n^2)$ backbites. Combining the theorem with known
results on spanning squared paths gives connectivity for every graph with
$\delta(G)\ge\lceil\frac{2n-1}3\rceil$, for the circulants $\mathrm{Circ}(n;S)$ with
$a,2a\in S$ for some $a$ coprime to $n$, and, with high probability, for the random
graph $G(n,p)$ with $p\ge c/\sqrt n$, for a sufficiently large constant $c$. In the other direction, graphs with arbitrarily high
connectivity may have disconnected reconfiguration graphs. For every $n\ge8$ and every
$1\le m\le\lfloor n/2\rfloor-3$ we construct a graph on $n$ vertices with
$\kappa=\lambda=\delta=m$ whose reconfiguration graph is disconnected, with at least
$(n-2m-4)!$ components. We conjecture that $\delta(G)\ge n/2$ suffices for
connectivity.
\end{abstract}

\section{Introduction}
A \emph{backbite move} on a Hamiltonian path $w_1\cdots w_n$ of a graph $G$ chooses an
endpoint, say $w_1$, and an index $k\ge3$ with $\{w_1,w_k\}\in E(G)$, adds the edge
$\{w_1,w_k\}$, and deletes $\{w_{k-1},w_k\}$; the result is again a Hamiltonian path. The
\emph{reconfiguration graph} $R(G)$ has the Hamiltonian paths of $G$ as its vertices, with
two joined when a single backbite carries one to the other, so that distance in $R(G)$
counts backbites. A Hamiltonian path and its reversal are the same vertex of $R(G)$.
We study when $R(G)$ is connected, that is, when any two Hamiltonian
paths of $G$ can be reconfigured into one another by a sequence of backbites.

The backbite move comes from the Monte Carlo simulation of lattice polymers, where it
is used to sample self-avoiding walks \cite{Man}. Oberdorf et al. later used the move to
sample compact lattice walks and observed that its general ergodicity was not known \cite{OFJK}.
The same operation, applied to paths that need not be spanning, is the rotation of
P\'osa's rotation-extension method \cite{Pos}.
For a survey of reconfiguration problems see \cite{Nis} and \cite{vdH}.
A \emph{switch} of size $k$ replaces $k$ edges of a Hamiltonian cycle or path of $G$
by $k$ edges of $G$ not on it, so that the result is again a Hamiltonian cycle or path.
Takaoka \cite{Tak} proved that deciding whether two
Hamiltonian cycles of a graph are related by a sequence of switches of size two is
PSPACE-complete,
a question posed implicitly by Ito et al. \cite{Ito} and raised again in \cite{vdH}.
Kleer, Patel and Stroh \cite{KPS} proved that any two Hamiltonian cycles of a graph
with $\delta(G)\ge n/2+7$ are related by switches of size at most ten, and gave graphs
with $\delta(G)\ge 2n/3-1$ having two Hamiltonian cycles that no sequence of switches of
size two relates. For fixed-endpoint subclasses in rectangular grids, connectivity under
switches of size two was proved for simple and $1$-complex paths by Nishat, Srinivasan
and Whitesides \cite{NSWsimple,NSWcomplex}. The author subsequently proved connectivity
for all rectangular-grid Hamiltonian paths under a move set consisting of switches of
size two, double switches, and backbites \cite{Kaz}.

Bousquet et al. \cite{BIK} reconfigure spanning trees by edge flips, an operation that deletes one edge and adds another.
They show that the problem is PSPACE-complete for every bound $d\ge3$ on the maximum
degree of the trees, and they leave the case $d=2$ open. A spanning tree of maximum degree
$2$ is a Hamiltonian path, so that case is the problem studied here. A backbite is an edge flip. Conversely, the only flip between Hamiltonian paths that is not itself a backbite
joins the two endpoints, and its effect can be realized by rotating the resulting Hamiltonian
cycle. Thus edge flips and backbites give the same reachability relation for Hamiltonian paths.

Theorem~\ref{thm:main2} below is the main result. It identifies a structural hypothesis that forces connectivity of $R(G)$.
For an ordering $v_1,\dots,v_n$ of a finite set, the \emph{square} of the path
$v_1\cdots v_n$, denoted $P_n^2$, is the graph on $\{v_1,\dots,v_n\}$ whose edges join the
vertices at index distance $1$ or $2$.

\begin{theorem}[Squared-path theorem]\label{thm:main2}
Let $G$ be a graph on $n$ vertices containing the square of a Hamiltonian path as a spanning
subgraph. The reconfiguration graph $R(G)$ is connected under backbite moves, with diameter $O(n^2)$.
\end{theorem}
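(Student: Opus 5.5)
We reduce to a canonical target. Fix the spanning squared path $H=P_n^2$ on the ordering $v_1,\dots,v_n$, and let $P^\ast=v_1v_2\cdots v_n$ be the canonical Hamiltonian path. Since backbites are reversible, $R(G)$ is an undirected graph. It therefore suffices to show that every Hamiltonian path $P$ of $G$ can be carried to $P^\ast$ by $O(n^2)$ backbites; the diameter bound then follows with a factor of two.

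The intended mechanism is an induction that \emph{grows a canonical prefix}. Say $P$ has prefix length $j$ if $P$ (in some orientation) starts $v_1v_2\cdots v_j$. The main step is a lemma: from any Hamiltonian path of prefix length $j<n$ we reach one of prefix length at least $j+1$ using $O(n)$ backbites. Every backbite used will add an edge of $H$ and will never delete an edge inside the prefix. Backbites may use the free end (the end not equal to $v_1$), and also the end $v_1$ when it is advantageous, provided the prefix is restored. Iterating the lemma at most $n$ times gives $O(n^2)$.

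To prove the lemma, write $P=v_1\cdots v_j\,x\cdots$ with $x\ne v_{j+1}$, and let $y$ be the free endpoint. The target vertex $v_{j+1}$ lies somewhere after $v_j$. Its $H$-neighbours outside the prefix include $v_{j+2}$ and $v_{j+3}$, and $v_j$ and $v_{j-1}$ are adjacent to it in $H$.

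First, walk the free end toward the region around $v_{j+1}$. Backbiting from $y$ to a vertex $w$ adjacent to $y$ in $H$ reverses the tail segment after $w$, and this moves the free endpoint. Since $H$ restricted to $\{v_{j+1},\dots,v_n\}$ is itself a squared path, and therefore connected with every vertex of degree at least $2$ there, we can steer so that the free endpoint becomes $v_{j+1}$, or a neighbour of $v_j$ in the suffix. The potential used for this steering is the index of the endpoint in the ordering. Each backbite along an $H$-edge $\{y,v_{i-1}\}$ or $\{y,v_{i-2}\}$ should decrease the index of the endpoint, or change which segment is reversed, in $O(1)$ cases.

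Second, once the free end is $y=v_{j+1}$, perform the closing move. If $v_j$ has successor $x$, then backbiting $y$ onto $v_j$ deletes the edge $v_jx$ and makes the path $v_1\cdots v_j v_{j+1}\cdots x$, reversing the tail. The prefix is now extended.

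The main obstacle is the steering step. One must show that the free endpoint can always be moved to $v_{j+1}$ in $O(n)$ backbites without touching the prefix. The difficulty is that a backbite from $y$ to $w$ deletes the path edge at $w$ on the side toward $y$. When $w$'s neighbour on that side lies in the prefix (that is, $w=x$), the move is forbidden. Also, $y$'s $H$-neighbours with smaller index may sit in awkward positions on the path.

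I would handle this by a local case analysis on the four $H$-neighbours $v_{i\pm1},v_{i\pm2}$ of the current endpoint $v_i$. The key observation to check is that among $v_{i-1}$ and $v_{i-2}$, at least one is a legal backbite target, and choosing it yields a new endpoint with index less than $i$ or equal to $i+1$. Combined with a two-step potential, this gives monotone progress toward index $j+1$.

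If the direct potential argument fails, the fallback is to use the end $v_1$. Backbite from $v_1$ to $v_3$, which is an $H$-edge, and then undo, so as to temporarily free the configuration. Alternatively, apply the triangle $v_{j-1}v_jv_{j+1}$ to rotate $v_{j+1}$ into place while sacrificing and then repairing the last prefix vertex. This costs only $O(1)$ extra moves per step and preserves the $O(n)$ bound.
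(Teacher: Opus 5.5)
Your overall plan, growing the prefix $v_1\cdots v_j$ with backbites that only add edges of $H$, can be made to work. However, the steering step, which you rightly flag as the crux, is not proved, and the key observation you propose to check is false for a general $G$. After $bb(y,w)$ the new free endpoint is the path-neighbour of $w$ on the side of $y$. It is joined to $w$ by an edge of the current path, which lies in $G$ but need not lie in $H=P_n^2$, so its index is arbitrary. The endpoint index is therefore not a potential. Your claim that the new index is $<i$ or equals $i+1$ holds only when the removed edge lies in $H$, and even then the outcome $v_{i+1}$ needs a specific follow-up move that you do not give. Connectivity and minimum degree of $H[\{v_{j+1},\dots,v_n\}]$ do not supply this. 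The fallback is not an argument either: a backbite followed by its inverse returns the same path, and the triangle manoeuvre is unspecified. The induction also lacks a base, since $v_1$ need not be an endpoint of the initial path. (A minor slip: the targets excluded by the prefix are $v_1,\dots,v_{j-1}$, not $x$. A backbite onto $x$ deletes the edge from $x$ to its successor, not $v_jx$.)

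The missing ingredient is the second potential the paper uses: $\beta$, the number of edges of the current path outside $P_n^2$. Every backbite you use adds an edge of $H$, so $\beta$ never increases. It drops exactly when the removed edge is outside $H$, which is precisely when the new endpoint is uncontrolled. With $\beta$, the two-backbite case analysis of Lemma~\ref{lem:force}, run downward, does your steering. For free end $v_i$ with $i\ge j+3$, backbite onto $v_{i-2}$ unless it is the path-neighbour of $v_i$. Otherwise backbite onto $v_{i-1}$, and if the new end is $v_{i+1}$, follow with $bb(v_{i+1},v_i)$, which leaves the free end at $v_{i-2}$. All targets have index at least $j+1$, so the prefix is untouched, and each step lowers either the index or $\beta$. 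For $i=j+2$, the backbite onto $v_j$ can only delete a bad edge $v_jx$, since a good one would force $x=v_{j+1}$. For $i=j+1$ your closing move applies. The base case follows from the same descent applied with each endpoint fixed in turn. Since $j$ and $\beta$ are both monotone, there are at most $2n$ phases of $O(n)$ backbites each, giving $O(n^2)$. Repaired this way, your route reaches $v_1v_2\cdots v_n$ directly and avoids the paper's closing-and-rotation step and its uniqueness lemma for the Hamiltonian cycle of $P_n^2$. As written, however, the steering lemma, and hence the theorem, is not established.
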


\noindent Containing a spanning squared path is preserved under adding edges. If $P_n^2\subseteq G$
and $G\subseteq G'$ on the same vertex set, then $P_n^2\subseteq G'$. So any sufficient
condition for containing a spanning squared path is itself a sufficient condition for
connectivity of $R(G)$, and Corollary~\ref{cor:densities} below records three instances. We first define circulant graphs.

For a set $S\subseteq\mathbb{Z}_n\setminus\{0\}$ with $S=-S$, the \emph{circulant}
$\mathrm{Circ}(n;S)$ is the graph with vertex set $\mathbb{Z}_n$ in which $i$ and $j$ are
adjacent when $i-j\in S$. We write $\mathrm{Circ}(n;j,k)$ for
$\mathrm{Circ}(n;\{\pm j,\pm k\})$, the circulant in which each vertex $i$ is adjacent to
$i\pm j$ and $i\pm k$. For a graph $F$, the \emph{square} $F^2$ is the graph on
$V(F)$ in which two distinct vertices are adjacent whenever their distance in $F$ is
at most two.

\begin{corollary}\label{cor:densities}
If $G$ has a Hamiltonian path, then $R(G^2)$ is connected. Moreover, $R(G)$ is
connected whenever any of the following hold:
\begin{enumerate}[label=(\alph*),topsep=2pt,itemsep=1pt]
\item $\delta(G)\ge\lceil\frac{2n-1}3\rceil$;
\item $G=\mathrm{Circ}(n;S)$ with $a,2a\in S$ for some $a$ coprime to $n$;
\item with high probability, $G=G(n,p)$ with $p\ge c/\sqrt n$, for a sufficiently large
constant $c$.
\end{enumerate}
\end{corollary}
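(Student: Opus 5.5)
Each part reduces to Theorem~\ref{thm:main2}: it is enough to show that the graph in question contains the square of a Hamiltonian path as a spanning subgraph. Containing a spanning $P_n^2$ is preserved under adding edges, so in each case I only need to exhibit one spanning squared path, citing known results where they exist.

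For the first assertion, let $v_1\cdots v_n$ be a Hamiltonian path of $G$. Vertices at index distance $1$ or $2$ along it are at distance at most two in $G$, so every edge of the square of this path lies in $G^2$. Thus $P_n^2\subseteq G^2$ spanning, and Theorem~\ref{thm:main2} applies.

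For (b), write $v_i=ia\in\mathbb{Z}_n$ for $i=0,\dots,n-1$. Since $\gcd(a,n)=1$, these are all distinct and exhaust $\mathbb{Z}_n$. Consecutive vertices differ by $a\in S$, and vertices at index distance two differ by $2a\in S$. Hence $v_0v_1\cdots v_{n-1}$ is a Hamiltonian path whose square lies in $\mathrm{Circ}(n;S)$. One caveat: if $2a\equiv 0$ the condition $2a\in S$ fails, so this case is excluded by hypothesis. The case $n\le 2$ is trivial.

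For (a), I would invoke the known extremal result for squares of Hamiltonian cycles, the P\'osa--Seymour conjecture as proved by Koml\'os--S\'ark\"ozy--Szemer\'edi for large $n$. Since it gives only $\delta\ge 2n/3$ and only for large $n$, that is the wrong citation here. The exact path threshold $\delta(G)\ge\lceil(2n-1)/3\rceil$ for a spanning squared path is a theorem of Fan and Kierstead, valid for all $n$, and I would cite it directly. Locating and stating this result precisely is the main obstacle; everything else is bookkeeping.

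For (c), I would cite the threshold for the square of a Hamiltonian cycle in $G(n,p)$. K\"uhn--Osthus gave $p\ge n^{-1/2+\varepsilon}$, Nenadov--\v{S}kori\'c gave $C\log^4 n/\sqrt n$, and Kahn--Narayanan--Park gave the sharp $p\ge C/\sqrt n$. Their result says that whp $G(n,p)$ contains the square of a Hamiltonian cycle. Deleting one vertex's worth of edges, that is, removing the two edges $v_{n-1}v_1$ and $v_nv_2$ together with the cycle edge $v_nv_1$, leaves the square of a Hamiltonian path. That path is then spanning, and Theorem~\ref{thm:main2} gives connectivity whp.
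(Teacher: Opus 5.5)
Your proposal is correct and matches the paper's proof: in each case you exhibit a spanning $P_n^2$ and then apply Theorem~\ref{thm:main2}. The squared paths come from the Hamiltonian path itself for $G^2$, from Fan--Kierstead for (a), from the ordering $0,a,2a,\dots,(n-1)a$ for (b), and from Kahn--Narayanan--Park together with $P_n^2\subseteq C_n^2$ for (c). You can drop the Koml\'os--S\'ark\"ozy--Szemer\'edi detour entirely, since, as you note yourself, Fan--Kierstead gives exactly the stated threshold for all $n$.
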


\noindent In Section~\ref{sec:disconnect} we show that graphs with arbitrarily high
connectivity may have a disconnected reconfiguration graph. For every $n\ge8$ there is a graph on
$n$ vertices with minimum degree $\lfloor n/2\rfloor-3$ and disconnected
reconfiguration graph. Conjecture~\ref{conj:half} asserts that minimum degree at
least $n/2$ suffices.

\noindent The proof of Theorem~\ref{thm:main2} is constructive. For a graph $G$ satisfying the hypothesis, we define a canonical Hamiltonian path $K$ of $G$, and show that any other Hamiltonian path of $G$ can be reconfigured to $K$ by backbites. Since backbites are invertible, it follows that $R(G)$ is connected.

The reconfiguration to $K$ consists of two phases. The first phase reconfigures an arbitrary
Hamiltonian path of $G$ into one all of whose edges lie in $P_n^2$. This is
Corollary~\ref{cor:tpfer}. The second phase reconfigures a Hamiltonian path of $P_n^2$
into $K$, and rests on the fact that $P_n^2$ has exactly one Hamiltonian cycle. This
is carried out in Sections~\ref{sec:scaffold}--\ref{sec:mainproof}.

\noindent The proof of Theorem~\ref{thm:main2} occupies Sections \ref{sec:prelim2}--\ref{sec:mainproof}, and the proof of Corollary~\ref{cor:densities} is given in Section~\ref{sec:sources}. Section~\ref{sec:disconnect} constructs graphs whose reconfiguration graph is disconnected.

\section{Preliminaries}\label{sec:prelim2}

For a Hamiltonian path $H$ of $G$, we write $bb(u,v)$ for the backbite of $H$ applied at
the endpoint $u$ whose added edge is $\{u,v\}$. The order of the arguments matters
precisely when $u$ and $v$ are both endpoints of $H$, in which case it determines the
removed edge: for $H=w_1\cdots w_n$, the backbite $bb(w_1,w_n)$ removes $\{w_{n-1},w_n\}$, while the backbite $bb(w_n,w_1)$
removes $\{w_1,w_2\}$. See Figure~\ref{fig:move}.

\begin{figure}[ht]
\centering
\begin{tikzpicture}[
  v/.style={circle,draw=black,fill=black,inner sep=1.7pt},
  e/.style={circle,draw=black,fill=white,line width=0.9pt,inner sep=2.0pt},
  pe/.style={black,line width=1.0pt},
  add/.style={black,line width=1.0pt,dashed}
]
\begin{scope}
\node[e,label=above:{$w_1$}]                              (a) at (1.9,2.9) {};
\node[v,label={[label distance=-1pt]below:{$w_{k-1}$}}]   (b) at (1.85,1.9) {};
\node[v,label={[label distance=-1pt]below:{$w_k$}}]       (c) at (3.0,1.9) {};
\node[e,label=below:{$w_n$}]                              (d) at (1.6,0.35) {};
\draw[pe] (a) .. controls (0.2,2.9) and (-0.2,1.35) .. (b);
\draw[pe] (b) -- (c);
\draw[pe] (c) .. controls (4.9,2.5) and (5.3,0.15) .. (d);
\draw[add] (a) -- (c);
\end{scope}
\begin{scope}[xshift=7.7cm]
\node[v,label=above:{$w_1$}]                              (a2) at (1.9,2.9) {};
\node[e,label={[label distance=-1pt]below:{$w_{k-1}$}}]   (b2) at (1.85,1.9) {};
\node[v,label={[label distance=-1pt]below:{$w_k$}}]       (c2) at (3.0,1.9) {};
\node[e,label=below:{$w_n$}]                              (d2) at (1.6,0.35) {};
\draw[pe] (a2) .. controls (0.2,2.9) and (-0.2,1.35) .. (b2);
\draw[pe] (c2) .. controls (4.9,2.5) and (5.3,0.15) .. (d2);
\draw[pe] (a2) -- (c2);
\end{scope}
\draw[-{Stealth[length=4.5pt]},line width=0.7pt] (5.15,1.5) -- (7.05,1.5)
  node[midway,above,inner sep=2pt]{$bb(w_1,w_k)$};
\end{tikzpicture}
\caption{Application of the backbite $bb(w_1,w_k)$ to a path.}
\label{fig:move}
\end{figure}

For a finite sequence or path $S=(x_1,\dots,x_k)$, $S^R$ denotes its reversal
$(x_k,\dots,x_1)$.
When one endpoint $b$ of the current path is held
fixed over a sequence of backbites, we call the opposite endpoint the \emph{free endpoint}. 

\begin{observation}[Effect of a backbite]\label{obs:blockrev}
Let $H=w_1\cdots w_n$ be a Hamiltonian path of $G$ and let $bb(w_1,w_k)$, $k\ge3$, be a
backbite of $H$. The resulting path is $w_{k-1},\dots,w_1,w_k,\dots,w_n$, and its new
endpoint is $w_{k-1}$. A backbite at $w_n$ is symmetric.
\end{observation}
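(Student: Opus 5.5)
The statement is Observation~\ref{obs:blockrev}, and I will prove it by tracking the edge set of $H$ through the backbite and reading off the resulting path. Write $H=w_1\cdots w_n$, with edge set $\{w_i,w_{i+1}\}$ for $1\le i\le n-1$. The backbite $bb(w_1,w_k)$ with $k\ge 3$ adds $\{w_1,w_k\}$. This edge is not on $H$, since $k\ge3$ means $w_1$ and $w_k$ are not consecutive. The move then deletes $\{w_{k-1},w_k\}$. The new edge set is therefore
\[
\{\{w_i,w_{i+1}\}: 1\le i\le k-2\}\cup\{\{w_1,w_k\}\}\cup\{\{w_i,w_{i+1}\}: k\le i\le n-1\}.
\]

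Next I check directly that these are exactly the edges of the sequence $w_{k-1},w_{k-2},\dots,w_1,w_k,w_{k+1},\dots,w_n$. The segment $w_{k-1}\cdots w_1$ is the reversal of the subpath $w_1\cdots w_{k-1}$, so it contributes the edges $\{w_i,w_{i+1}\}$ for $i\le k-2$. The junction between $w_1$ and $w_k$ contributes the added edge. The tail $w_k\cdots w_n$ contributes the remaining edges.

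The sequence lists every vertex exactly once, so it is a Hamiltonian path, and it uses only edges of $G$. Its endpoints are $w_{k-1}$ and $w_n$, so $w_{k-1}$ is the new endpoint replacing $w_1$. The same computation applies to a backbite at $w_n$ after relabelling $H$ as its reversal $w_n\cdots w_1$. This is legitimate because a path and its reversal are the same vertex of $R(G)$.

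There is no real obstacle here. The only care needed is to note that $k\ge3$ makes the added edge new and keeps $w_{k-1}\neq w_1$ distinct from $w_k$. When $k=n$ and the move is $bb(w_1,w_n)$, the deleted edge is $\{w_{n-1},w_n\}$, as the paper's convention requires.
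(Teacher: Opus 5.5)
Your proof is correct and takes the same approach as the paper. The paper states this as an observation read directly off the definition of a backbite, and you carry out that reading explicitly: you compare the edge set after adding $\{w_1,w_k\}$ and deleting $\{w_{k-1},w_k\}$ with the edges of $w_{k-1}\cdots w_1w_k\cdots w_n$, and you handle the case at $w_n$ by reversing the labelling.
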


\noindent We remark that, by Observation~\ref{obs:blockrev}, a backbite at one endpoint never moves the other endpoint: $bb(w_1,w_n)$ on $w_1 \cdots w_n$ gives $w_{n-1}...w_1w_n$. Furthermore a backbite is invertible, since after $bb(w_1,w_k)$, we may apply $bb(w_{k-1},w_k)$ and recover $H$.

For a Hamiltonian cycle $C$ of a graph and an edge $f=\{x,y\}$ of $C$, the path
$C-f$ has endpoints $x$ and $y$, and $bb(x,y)$ adds $f$ and removes the edge of $C$ at $y$
other than $f$, so its result is $C-f'$ for the cycle edge $f'$ neighbouring $f$ at $y$.
Successive such backbites, called \emph{rotations} of $C$, move the missing edge around $C$
in either direction.

\section{Advancing the endpoint over the squared path}\label{sec:tp}

Let $P_n=v_1\cdots v_n$ be a path on $n$ vertices, fixed for the rest of the paper, and
let $P_n^2$ be its square. Throughout this section $J$ is a simple graph on $\{v_1,\dots,v_n\}$
with $E(P_n^2)\subseteq E(J)$. For a Hamiltonian path $H$ of $J$,
$\beta(H):=|E(H)\setminus E(P_n^2)|$ counts its \emph{bad} edges. A backbite whose added
edge lies in $P_n^2$ never increases $\beta$, and decreases $\beta$ exactly when the
removed edge is bad.

Fix one endpoint of a Hamiltonian path.
Lemma~\ref{lem:force} shows that backbites at the free endpoint increase its index along the
designated ordering $v_1,\dots,v_n$, and repeating this brings the free endpoint to the
pair $\{v_{n-1},v_n\}$. Fixing that endpoint and repeating from the other end brings
both endpoints to $\{v_{n-1},v_n\}$, and the path can then be closed into a Hamiltonian
cycle by adding the edge $\{v_{n-1},v_n\}$, an edge of $P_n^2$. At each step the
alternative outcome is a drop in $\beta$. Section~\ref{sec:mainproof} applies this to $G$ and to $P_n^2$.

\begin{lemma}[Endpoint advancing, one step]\label{lem:force}
Let $n\ge3$ and let $H$ be a Hamiltonian path of $J$. Fix one endpoint $b$ of $H$, and
suppose the free endpoint of $H$ is $v_i$ with $i\le n-2$. Then there is a sequence of at most two backbites applied at the free endpoint, each with added edge in $E(P_n^2)$, that yields a path $H'$ satisfying one of:
\begin{enumerate}[label=(\alph*),topsep=2pt,itemsep=1pt]
\item $\beta(H')<\beta(H)$; or
\item the free endpoint of $H'$ is $v_{i'}$ for some $i'>i$.
\end{enumerate}
\end{lemma}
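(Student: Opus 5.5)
Write $H=w_1w_2\cdots w_n$ with $w_1=v_i$ the free endpoint and $w_n=b$. I will use only backbites at $w_1$ whose added edge joins $v_i$ to $v_{i+1}$ or $v_{i+2}$; these vertices exist since $i\le n-2$. Such a backbite does not increase $\beta$. If its removed edge is bad, $\beta$ strictly drops and we are in (a). So in each case I may assume the removed edge $\{w_{k-1},w_k\}$ lies in $P_n^2$. Then the new free endpoint $w_{k-1}$ (Observation~\ref{obs:blockrev}) is at index distance $1$ or $2$ from $w_k$. Note also that $w_{k-1}\neq w_1$ whenever $k\ge3$. A target $w_k=b$ (that is, $k=n$) is allowed, since the backbite does not move $b$.

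\emph{Case 1: $v_{i+2}\neq w_2$.} Then $v_{i+2}=w_k$ with $k\ge3$, and I apply $bb(w_1,v_{i+2})$. If the removed edge is good, $w_{k-1}$ is a $P_n^2$-neighbour of $v_{i+2}$ other than $v_i$. Its index therefore lies in $\{i+1,i+3,i+4\}$, all larger than $i$, so (b) holds after one backbite.

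\emph{Case 2: $v_{i+2}=w_2$.} Then $v_{i+1}\ne w_2$, so $v_{i+1}=w_k$ with $k\ge3$, and I apply $bb(w_1,v_{i+1})$. Assume the removed edge is good. Then $w_{k-1}$ is a $P_n^2$-neighbour of $v_{i+1}$ other than $v_i$, so its index is $i-1$, $i+2$ or $i+3$. If it is $i+2$ or $i+3$, we are done. If $k=3$ then $w_{k-1}=w_2=v_{i+2}$, which is also done.

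The only remaining subcase is $w_{k-1}=v_{i-1}$ with $k\ge4$. This subcase is the crux, since a naive second backbite back to $v_{i+1}$ would simply undo the first move. The current path is
\[
u_1\cdots u_n=w_{k-1},w_{k-2},\dots,w_2,w_1,w_k,\dots,w_n,
\]
so that $u_1=v_{i-1}$, $u_{k-2}=w_2=v_{i+2}$ and $u_{k-1}=w_1=v_i$. The edge $\{v_{i-1},v_i\}$ lies in $P_n^2$, and $v_i=u_{k-1}$ sits at position $k-1\ge3$. Hence $bb(u_1,v_i)$ is a legal backbite whose added edge is in $P_n^2$. It removes $\{u_{k-2},u_{k-1}\}=\{v_{i+2},v_i\}$ and makes $u_{k-2}=v_{i+2}$ the new free endpoint. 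Since $i+2>i$, outcome (b) holds after two backbites.

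In every branch at most two backbites are used, each at the free endpoint with added edge in $E(P_n^2)$. Each branch ends either with a bad removed edge, hence with $\beta$ strictly smaller and no later increase, or with the free endpoint at a larger index. This proves the lemma.
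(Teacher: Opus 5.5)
Your proof is correct and follows the paper's argument essentially step for step. You use the same case split on whether $v_{i+2}$ is the second vertex of $H$, the same first backbites $bb(v_i,v_{i+2})$ and $bb(v_i,v_{i+1})$, and the same second move $bb(v_{i-1},v_i)$ in the $v_{i-1}$ subcase, which makes $v_{i+2}$ the free endpoint. The only difference is cosmetic: you track the position $k\ge4$ explicitly, whereas the paper writes $H=(v_i,v_{i+2},A,v_{i-1},v_{i+1},C)$ with $A$ possibly empty.
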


\begin{proof}
Write $H=(v_i,q_2,q_3,\dots)$ from the free endpoint. Since $i\le n-2$, both edges
$v_iv_{i+1}$ and $v_iv_{i+2}$ lie in $E(P_n^2)$, and only the second vertex $q_2$ of $H$ is
ineligible as the target of a backbite at $v_i$, so a backbite at $v_i$ with added edge in
$E(P_n^2)$ is available. The proof splits into two cases: $q_2 \neq v_{i+2}$ and $q_2 = v_{i+2}$. In each case the required backbite sequences are given explicitly, and each backbite is applied at the current free endpoint with the added edge in $E(P_n^2)$, so $\beta$ never increases. Thus if the removed edge of some applied backbite
is bad, $\beta$ decreases there, so (a) holds, and we are done. Therefore we need only consider the case where every removed edge lies in
$E(P_n^2)$.

\medskip

\noindent\textit{CASE 1. $v_{i+2}\ne q_2$.} Apply $bb(v_i,v_{i+2})$ to obtain $H'$ with
free endpoint $v'$. By the definition of $P_n^2$,
$N_{P_n^2}(v_{i+2})\subseteq\{v_i,v_{i+1},v_{i+3},v_{i+4}\}$, so
$v'\in\{v_i,v_{i+1},v_{i+3},v_{i+4}\}$. Note that by the definition of a backbite, $v'\ne v_i$, and every other index exceeds $i$,
so (b) holds after one backbite. Note that, by the remark after Observation~\ref{obs:blockrev}, the fixed endpoint $b$
remains an endpoint even when the target of the backbite is $b$ itself, as can happen
here with $b=v_{i+2}$. This holds for all backbites and will be used implicitly throughout.

\medskip

\noindent\textit{CASE 2. $v_{i+2}=q_2$.} Apply $bb(v_i,v_{i+1})$ to obtain a path $H''$ with
free endpoint $v''$. By the definition of $P_n^2$,
$N_{P_n^2}(v_{i+1})\subseteq\{v_{i-1},v_i,v_{i+2},v_{i+3}\}$, so
$v''\in\{v_{i-1},v_i,v_{i+2},v_{i+3}\}$, and $v''\ne v_i$ by the definition of a backbite. If
$v''\in\{v_{i+2},v_{i+3}\}$, then (b) holds after one backbite, with $H''=H'$.

The only remaining possibility is $v''=v_{i-1}$, which requires $i\ge2$: for $i=1$ there is
no vertex $v_{i-1}$, and the previous sentence already applies. So suppose $i\ge2$ and
$v''=v_{i-1}$. The removed edge was $\{v_{i-1},v_{i+1}\}$, so, for some possibly empty
subpaths $A$ and $C$ of $H$,
\[
H=(v_i,\;v_{i+2},\;A,\;v_{i-1},\;v_{i+1},\;C),\qquad
H''=(v_{i-1},\;A^R,\;v_{i+2},\;v_i,\;v_{i+1},\;C).
\]
Since $\{v_{i-1},v_i\}\notin E(H'')$, we may apply
$bb(v_{i-1},v_i)$, obtaining $H'=(v_{i+2},\;A,\;v_{i-1},\;v_i,\;v_{i+1},\;C)$ with free
endpoint $v_{i+2}$, so (b) holds after two backbites.

\end{proof}

\begin{corollary}[Endpoint advancing, iterated]\label{lem:tp}
Let $H$ be a Hamiltonian path of $J$ and fix one endpoint $b$ of $H$. Using backbites at the free endpoint, each with added edge in $P_n^2$, one can reach either a path with smaller $\beta$, or a path whose endpoints are $b$ and a vertex of $\{v_{n-1},v_n\}$.

\end{corollary}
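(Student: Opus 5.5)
The plan is to iterate Lemma~\ref{lem:force} and control termination by the index of the free endpoint. Start from $H$ with fixed endpoint $b$. If the free endpoint is already $v_{n-1}$ or $v_n$, we are done with zero backbites. Otherwise the free endpoint is $v_i$ with $i\le n-2$, and Lemma~\ref{lem:force} applies (we may assume $n\ge3$, since for $n\le2$ every vertex lies in $\{v_{n-1},v_n\}$). It yields at most two backbites at the free endpoint, each with added edge in $E(P_n^2)$, producing $H'$. If alternative (a) holds, $\beta(H')<\beta(H)$ and we stop with the first conclusion. If (b) holds, the free endpoint of $H'$ is $v_{i'}$ with $i'>i$. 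By the remark after Observation~\ref{obs:blockrev}, the fixed endpoint $b$ is still an endpoint of $H'$, so we may repeat with $H'$ in place of $H$.

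Termination follows because, as long as (a) never occurs, the index of the free endpoint strictly increases at each round. It is bounded by $n$, so after at most $n-2$ rounds the free endpoint has index at least $n-1$, that is, it lies in $\{v_{n-1},v_n\}$. The resulting path then has endpoints $b$ and a vertex of $\{v_{n-1},v_n\}$. This also gives a bound of at most $2(n-2)$ backbites, which will feed into the $O(n^2)$ diameter count later.

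Two details need checking. First, the free endpoint must never coincide with $b$; this holds since a path's two endpoints are distinct. Second, $b$ may itself be $v_{n-1}$ or $v_n$, but that does not affect the conclusion. The only care needed is that each round is applied to a genuine Hamiltonian path of $J$ with $b$ as an endpoint, which the lemma and the remark guarantee. Since every intermediate backbite has added edge in $P_n^2$, $\beta$ never increases along the way. So if at some round the lemma reports (a), we have reached a path with smaller $\beta$ than the original $H$, as required. I expect no real obstacle; the argument is a straightforward induction on $n-i$.
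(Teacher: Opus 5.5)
Your proposal is correct and follows the paper's proof: iterate Lemma~\ref{lem:force}, stop if (a) occurs, and otherwise use the strict increase of the free endpoint's index to reach $\{v_{n-1},v_n\}$ within $n-2$ applications. Your extra remarks fill in details the paper leaves implicit: the $n\le 2$ case, the fact that $\beta$ never increases (so a drop relative to an intermediate path is a drop relative to $H$), and the bound of $2(n-2)$ backbites.
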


\begin{proof}
Apply Lemma~\ref{lem:force} repeatedly while the free endpoint is $v_i$ with $i\le n-2$. If
outcome (a) ever occurs, we are done. Otherwise each application increases the index of the
free endpoint, which therefore reaches $\{v_{n-1},v_n\}$ after at most $n-2$ applications,
and the endpoints are then $b$ and a vertex of $\{v_{n-1},v_n\}$.
\end{proof}

\begin{corollary}[Smaller $\beta$ or adjacent endpoints]\label{cor:tpclosure}
From every Hamiltonian path of $J$, backbites with added edges in $E(P_n^2)$ reach a path with
smaller $\beta$ or a path whose endpoints are $v_{n-1}$ and $v_n$, which are adjacent in
$P_n^2$.
\end{corollary}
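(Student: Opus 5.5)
The plan is to apply Corollary~\ref{lem:tp} twice, once from each end, as sketched at the start of this section. Let $H$ be a Hamiltonian path of $J$ with endpoints $x$ and $y$.

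\emph{First application.} Fix $b=x$ and apply Corollary~\ref{lem:tp}. If it produces a path with smaller $\beta$, we are done. Otherwise we reach a path $H_1$ whose endpoints are $x$ and some $u\in\{v_{n-1},v_n\}$. Every backbite used so far was applied at the free endpoint with added edge in $E(P_n^2)$, so by the remark after Observation~\ref{obs:blockrev} the endpoint $x$ was never moved.

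\emph{Second application.} Fix $b=u$ in $H_1$ and apply Corollary~\ref{lem:tp} again, now with $x$ as the free endpoint. Either $\beta$ drops, and we are done, or we reach a path $H_2$ whose endpoints are $u$ and some $w\in\{v_{n-1},v_n\}$. Since $H_2$ is a Hamiltonian path, its two endpoints are distinct, so $w\ne u$. Hence $\{u,w\}=\{v_{n-1},v_n\}$. These two vertices are at index distance $1$, so they are adjacent in $P_n^2$.

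The only step needing care is this last one. We must know that $u$ stays an endpoint throughout the second application even when some backbite targets $u$ itself. This is exactly the remark after Observation~\ref{obs:blockrev}, which was already invoked in Case~1 of Lemma~\ref{lem:force}. The degenerate case $n\le2$ is trivial, since there the endpoints are already $v_{n-1}$ and $v_n$, and for $n\ge3$ Lemma~\ref{lem:force} applies. No further obstacle is expected.
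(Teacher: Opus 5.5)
Your proposal is correct and matches the paper's proof: both apply Corollary~\ref{lem:tp} twice, first fixing an original endpoint and then fixing the vertex of $\{v_{n-1},v_n\}$ that was reached. In both, distinctness of the endpoints forces the second free endpoint onto the other vertex of that pair. The point you make explicit, that $\beta$ is unchanged through the first application unless it drops (so a drop in the second application is a drop relative to the original path), is left implicit in the paper as well.
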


\begin{proof}
Fix either endpoint as $b$ and apply Corollary~\ref{lem:tp}. Unless $\beta$ drops, we
reach a path with endpoints $b$ and $w\in\{v_{n-1},v_n\}$. Now fix $w$ and apply
Corollary~\ref{lem:tp} at the opposite end. Unless $\beta$ drops, the endpoints become
$w$ and the vertex of $\{v_{n-1},v_n\}\setminus\{w\}$, so they are $v_{n-1}$ and $v_n$,
adjacent in $P_n^2$.
\end{proof}

\begin{corollary}[Reaching $P_n^2$]\label{cor:tpfer}
Backbites reconfigure every Hamiltonian path of $J$ into a Hamiltonian path of $P_n^2$.
\end{corollary}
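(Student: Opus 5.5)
We argue by induction on $\beta(H)$; the goal is to show that backbites take any Hamiltonian path $H$ of $J$ to one with $\beta=0$. A path with $\beta=0$ uses only edges of $P_n^2$, so it is a Hamiltonian path of $P_n^2$. If $\beta(H)=0$ there is nothing to prove.

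Suppose $\beta(H)>0$. Apply Corollary~\ref{cor:tpclosure} to $H$. All of its backbites add edges of $E(P_n^2)\subseteq E(J)$, so they are legal in $J$, and none of them increases $\beta$. There are two possible outcomes.

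\emph{First outcome: $\beta$ drops.} Then we have reached a path $H_1$ with $\beta(H_1)<\beta(H)$, and we invoke the induction hypothesis on $H_1$.

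\emph{Second outcome: the endpoints are $v_{n-1}$ and $v_n$.} Adding the edge $f=\{v_{n-1},v_n\}$ closes the current path $H_1$ into a Hamiltonian cycle $C$ of $J$. Since $\beta(H_1)=\beta(H)>0$, the cycle $C$ contains a bad edge $g\notin E(P_n^2)$. We rotate $C$ as described after Observation~\ref{obs:blockrev}. Each rotation adds the currently missing cycle edge and removes the neighbouring one. The first rotation adds $f$, which is good. Moving the missing edge around $C$ in one direction until it equals $g$ produces $C-g$. This path has exactly one fewer bad edge than $C$, so its bad-edge count is $\beta(H_1)-1<\beta(H)$, since $C$ has the same bad edges as $H_1$. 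We then invoke the induction hypothesis on $C-g$.

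Rotations may add bad edges partway through, but that does not matter: the statement only requires backbites of $J$, and we compare $\beta$ only at the final path $C-g$. The edge sets along the way are all of the form $C-e$, so we only need $C-g$ to have fewer bad edges than $H$. It does, since $f$ is good and $g$ is bad.

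The induction terminates because $\beta$ is a nonnegative integer that strictly decreases in each round. The only delicate point is the second outcome, namely checking that a closing rotation really reduces $\beta$. This relies on $f=\{v_{n-1},v_n\}$ being an edge of $P_n^2$, and that fact is supplied by Corollary~\ref{cor:tpclosure}.
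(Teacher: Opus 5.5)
Your proof is correct and matches the paper's argument: you iterate Corollary~\ref{cor:tpclosure}, and when the endpoints become $v_{n-1},v_n$ with $\beta$ unchanged, you close the path with the good edge $v_{n-1}v_n$ and rotate until a bad edge of the cycle is the missing one. That lowers $\beta$ by one.
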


\begin{proof}
Let $H$ be a Hamiltonian path of $J$. A path with $\beta=0$ is a Hamiltonian path of
$P_n^2$, so it suffices to show that from every path with $\beta>0$ backbites reach a
path with smaller $\beta$. Apply Corollary~\ref{cor:tpclosure} and let $H'$ be the path
reached. If $\beta(H')<\beta(H)$, we are done. Otherwise the endpoints of $H'$ are
$v_{n-1}$ and $v_n$ and $\beta(H')=\beta(H)>0$. Since
$v_{n-1}v_n\in E(P_n^2)\subseteq E(J)$, adding $v_{n-1}v_n$ to $H'$ yields a Hamiltonian
cycle $C$ of $J$, and some edge $f$ of $C$ does not lie in $E(P_n^2)$. Rotations of $C$
reconfigure $H'$ into $C-f$, whose edges are those of $H'$, plus $v_{n-1}v_n$, minus $f$,
so that $\beta(C-f)=\beta(H')-1$.
\end{proof}

\section{The canonical path}\label{sec:scaffold}
Throughout Sections~\ref{sec:scaffold}--\ref{sec:mainproof}, $G$ is a simple graph on $n$
vertices, and $v_1,\dots,v_n$ is an ordering of $V(G)$ with $E(P_n^2)\subseteq E(G)$.
Such an ordering exists by the hypothesis of Theorem~\ref{thm:main2}, and we call it a
\emph{$P_n$-ordering} of $G$. Since $E(P_n)\subseteq E(P_n^2)$, the fixed path
$P_n=v_1\cdots v_n$ is a Hamiltonian path of $G$.

\begin{definition}[Canonical path]\label{def:K2}
Let $G$ be a simple graph on $n$ vertices and let $v_1,\dots,v_n$ be a $P_n$-ordering of
$G$. Define the ordering $K$ of $V(G)$ by
\[
K=(v_1,v_3,\dots,v_{n-1},\;v_n,v_{n-2},\dots,v_4,v_2)\qquad\text{if $n$ is even},
\]
\[
K=(v_1,v_3,\dots,v_n,\;v_{n-1},v_{n-3},\dots,v_4,v_2)\qquad\text{if $n$ is odd}.
\]
\end{definition}

Figure~\ref{fig:square} illustrates the canonical path for $n=9$.

\begin{figure}[ht]
\centering
\begin{tikzpicture}[
  v/.style={circle,draw=black,fill=black,inner sep=1.5pt},
  ge/.style={black,line width=0.8pt},
  hl/.style={line width=5pt,opacity=0.30,cyan!70!blue,line cap=round,line join=round}
]
\coordinate (v1) at (0.000,0);
\coordinate (v2) at (1.200,0);
\coordinate (v3) at (2.400,0);
\coordinate (v4) at (3.600,0);
\coordinate (v5) at (4.800,0);
\coordinate (v6) at (6.000,0);
\coordinate (v7) at (7.200,0);
\coordinate (v8) at (8.400,0);
\coordinate (v9) at (9.600,0);
\draw[hl] (v1) to[bend left=60] (v3);
\draw[hl] (v3) to[bend left=60] (v5);
\draw[hl] (v5) to[bend left=60] (v7);
\draw[hl] (v7) to[bend left=60] (v9);
\draw[hl] (v9) -- (v8);
\draw[hl] (v6) to[bend right=60] (v8);
\draw[hl] (v4) to[bend right=60] (v6);
\draw[hl] (v2) to[bend right=60] (v4);
\draw[ge] (v1) -- (v2);
\draw[ge] (v2) -- (v3);
\draw[ge] (v3) -- (v4);
\draw[ge] (v4) -- (v5);
\draw[ge] (v5) -- (v6);
\draw[ge] (v6) -- (v7);
\draw[ge] (v7) -- (v8);
\draw[ge] (v8) -- (v9);
\draw[ge] (v1) to[bend left=60] (v3);
\draw[ge] (v2) to[bend right=60] (v4);
\draw[ge] (v3) to[bend left=60] (v5);
\draw[ge] (v4) to[bend right=60] (v6);
\draw[ge] (v5) to[bend left=60] (v7);
\draw[ge] (v6) to[bend right=60] (v8);
\draw[ge] (v7) to[bend left=60] (v9);
\node[v,label=below:{$v_{1}$}] at (v1) {};
\node[v,label=above:{$v_{2}$}] at (v2) {};
\node[v,label=below:{$v_{3}$}] at (v3) {};
\node[v,label=above:{$v_{4}$}] at (v4) {};
\node[v,label=below:{$v_{5}$}] at (v5) {};
\node[v,label=above:{$v_{6}$}] at (v6) {};
\node[v,label=below:{$v_{7}$}] at (v7) {};
\node[v,label=above:{$v_{8}$}] at (v8) {};
\node[v,label=below:{$v_{9}$}] at (v9) {};
\end{tikzpicture}
\caption{The squared path $P_9^2$, with the canonical path
$K=v_1v_3v_5v_7v_9v_8v_6v_4v_2$ highlighted.}
\label{fig:square}
\end{figure}

\noindent For the remainder of the paper, $K$ denotes this ordering and no other. It is the
canonical form that every Hamiltonian path of $G$ will be reconfigured into.

\begin{lemma}[The Hamiltonian cycle of $P_n^2$]\label{lem:unique}
Let $n\ge3$. Then $K$ is a Hamiltonian path of $P_n^2$, and $K+\{v_1,v_2\}$ is the only
Hamiltonian cycle of $P_n^2$.
\end{lemma}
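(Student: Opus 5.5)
First I check that $K$ is a Hamiltonian path of $P_n^2$. Consecutive vertices of $K$ are $v_{2j-1}v_{2j+1}$ in the first half and $v_{2j+2}v_{2j}$ in the second half, both at index distance $2$. The turning edge $v_{n-1}v_n$ (for $n$ even) or $v_nv_{n-1}$ (for $n$ odd) has index distance $1$. Every index occurs exactly once, since the odd indices are listed ascending and the even indices descending. The endpoints are $v_1$ and $v_2$, which are adjacent in $P_n^2$. Hence $K+\{v_1,v_2\}$ is a Hamiltonian cycle.

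For uniqueness I will show that any Hamiltonian cycle $C$ of $P_n^2$ must contain specific edges, forced by degrees, by induction along the ordering. In $P_n^2$ the vertex $v_1$ has degree $2$ (neighbours $v_2,v_3$), so $C$ contains $v_1v_2$ and $v_1v_3$. Similarly $v_n$ has degree $2$, so $C$ contains $v_nv_{n-1}$ and $v_nv_{n-2}$. For the induction, the claim is that $C$ contains every edge $v_jv_{j+2}$ for $1\le j\le n-2$, together with $v_1v_2$ and $v_{n-1}v_n$. These are exactly the $n$ edges of $K+\{v_1,v_2\}$, which then forces $C=K+\{v_1,v_2\}$.

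The inductive step goes as follows. Suppose $C$ contains $v_1v_2$ and $v_jv_{j+2}$ for all $j<k$, and consider $v_kv_{k+2}$ with $k\ge2$ and $k+2\le n$. Look at vertex $v_k$. Its neighbours in $P_n^2$ are $v_{k-2},v_{k-1},v_{k+1},v_{k+2}$. By induction $C$ already uses $v_{k-2}v_k$ when $k\ge3$, or $v_1v_2$ when $k=2$.

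The case $k=2$ is cleanest to handle directly: $v_2$ already has the edge $v_1v_2$. The edge $v_2v_3$ would close the short cycle $v_1v_2v_3$, which is excluded when $n>3$. So $v_2$'s second edge is $v_2v_4$, or $v_2v_3$ when $n=3$; in the latter case $v_2v_3=v_{n-1}v_n$ anyway.

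For $k\ge3$, I rule out the other two candidates:
\begin{itemize}
\item $v_kv_{k-1}$ is impossible, because $v_{k-1}$ is already saturated by the edges $v_{k-3}v_{k-1}$ and $v_{k-1}v_{k+1}$, both previously forced, or by $v_1v_2$ and $v_2v_4$ when $k=3$.
\item $v_kv_{k+1}$ must be excluded when $k+1<n$. At that point $v_{k+1}$ already carries $v_{k-1}v_{k+1}$. Adding $v_kv_{k+1}$ would close a cycle through the forced zigzag, because the forced edges together with $v_1v_2$ already form a single path from $v_k$ back to $v_{k+1}$ through all of $v_1,\dots,v_{k+1}$. This yields a cycle on $k+1<n$ vertices, a contradiction.
\end{itemize}
Hence $v_kv_{k+2}\in C$.

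Finally, when $k=n-1$ the forced path already covers everything, and closing it requires $v_{n-1}v_n$. This last step is consistent with the degree-$2$ argument at $v_n$.

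The main obstacle is the bookkeeping in the step that excludes $v_kv_{k+1}$. I need to check precisely that the forced edges on $\{v_1,\dots,v_{k+1}\}$ form a single path with endpoints $v_k$ and $v_{k+1}$, namely $v_k,v_{k-2},\dots$ down to $v_1$ or $v_2$, across $v_1v_2$, then up $\dots,v_{k-1},v_{k+1}$. I will state this as an explicit invariant in the induction. I will also handle the small cases $n=3,4$ separately to avoid off-by-one issues.
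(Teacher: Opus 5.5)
Your proof is correct and follows the paper's argument: both force the edges $v_jv_{j+2}$ one at a time by looking at the lower endpoint of the growing forced path, excluding its lower neighbours by saturation and excluding the other endpoint because that would close a cycle on fewer than $n$ vertices, and then close the path with $v_{n-1}v_n$. The invariant you flag as the main bookkeeping point is the one the paper states explicitly (a path on $\{v_1,\dots,v_k\}$ with endpoints $v_{k-1}$ and $v_k$), and your description of that path is already accurate.
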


\begin{proof}
Note that consecutive vertices of $K$ are adjacent in $P_n^2$, so $K$ is a Hamiltonian
path of $P_n^2$. Its endpoints $v_1$ and $v_2$ are adjacent in $P_n^2$, so
$K+\{v_1,v_2\}$ is a Hamiltonian cycle of $P_n^2$.

It remains to prove uniqueness. Let $C$ be a Hamiltonian cycle of $P_n^2$. We will
show that $C=K+\{v_1,v_2\}$. For $n=3$ the graph $P_3^2$ is a triangle, so this holds.
Assume $n\ge4$. The only neighbours of $v_1$ are $v_2$ and $v_3$, so $C$ contains the path
$v_2v_1v_3$. Suppose that, for some $3\le k<n$, the edges of $C$ found so far form a
path on $\{v_1,\dots,v_k\}$ with endpoints $v_{k-1}$ and $v_k$. Each of
$v_1,\dots,v_{k-2}$ already has two edges of $C$, so the second neighbour of $v_{k-1}$
on $C$ is $v_k$ or $v_{k+1}$. It is not $v_k$, since that would close a cycle on $k<n$
vertices. So $C$ contains $v_{k-1}v_{k+1}$, and the edges found so far form a path on
$\{v_1,\dots,v_{k+1}\}$ with endpoints $v_k$ and $v_{k+1}$. For $k=n-1$ this path is
spanning, with endpoints $v_{n-1}$ and $v_n$, and $C$ contains $v_{n-1}v_n$. So $C$
consists of $v_1v_2$, the edges $v_iv_{i+2}$ for $1\le i\le n-2$, and $v_{n-1}v_n$,
which are the edges of $K+\{v_1,v_2\}$.
\end{proof}

\section{Proof of Theorem~\ref{thm:main2}}\label{sec:mainproof}

For $n\le3$ the statement is immediate, so we may assume $n\ge4$. Phase one of the
proof takes an arbitrary Hamiltonian path of $G$ to a Hamiltonian path of $P_n^2$, by a
single application of Corollary~\ref{cor:tpfer}. Phase two is the following proposition.

\begin{proposition}[Reaching $K$]\label{prop:reachK}
Backbites reconfigure every Hamiltonian path of $P_n^2$ into $K$.
\end{proposition}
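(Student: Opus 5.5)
Let $H$ be a Hamiltonian path of $P_n^2$. I will bring its endpoints to $v_{n-1}$ and $v_n$, close it into the unique Hamiltonian cycle of $P_n^2$, and then rotate that cycle until the missing edge is $\{v_1,v_2\}$.

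Every backbite performed inside $P_n^2$ adds an edge of $P_n^2$. It therefore keeps us among Hamiltonian paths of $P_n^2$ with $\beta=0$, where $\beta$ is measured against $J=P_n^2$.

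\emph{Step 1.} Apply Corollary~\ref{cor:tpclosure} with $J=P_n^2$. Since $\beta(H)=0$, outcome ``smaller $\beta$'' is impossible. So the backbites supplied by the corollary, all with added edges in $E(P_n^2)$, take $H$ to a Hamiltonian path $H'$ of $P_n^2$ whose endpoints are $v_{n-1}$ and $v_n$. This uses Lemma~\ref{lem:force} at most $O(n)$ times from each end, hence $O(n)$ backbites in total.

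\emph{Step 2.} The edge $\{v_{n-1},v_n\}$ lies in $P_n^2$. So $C=H'+\{v_{n-1},v_n\}$ is a Hamiltonian cycle of $P_n^2$. By Lemma~\ref{lem:unique} it is the unique cycle $C=K+\{v_1,v_2\}$, and in particular $H'=C-\{v_{n-1},v_n\}$. By the discussion of rotations in Section~\ref{sec:prelim2}, successive backbites move the missing edge around $C$ one position at a time. Rotating at most $n-1$ times therefore turns $C-\{v_{n-1},v_n\}$ into $C-\{v_1,v_2\}=K$, which completes the proof. Since every rotation is a backbite within $P_n^2$, the whole reconfiguration stays inside $P_n^2$. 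This phase costs $O(n)$ moves, so the $O(n^2)$ diameter in Theorem~\ref{thm:main2} will come from phase one, which may need up to $n$ decreases of $\beta$.

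The main obstacle is the bookkeeping in Step 1. One has to make sure Corollary~\ref{cor:tpclosure} really applies with $J=P_n^2$. The standing hypothesis $E(P_n^2)\subseteq E(J)$ holds trivially. Every backbite used has its added edge in $P_n^2$, so no path ever leaves $P_n^2$ and $\beta$ stays $0$, forcing the ``adjacent endpoints'' alternative. I would also check the small cases $n=4,5$ directly, to make sure Lemma~\ref{lem:force}'s neighbour lists do not reference nonexistent vertices, although the lemma already states $N_{P_n^2}(\cdot)\subseteq$ these lists, so this should be automatic.
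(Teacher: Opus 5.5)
Your proposal is correct and follows essentially the same route as the paper. You apply Corollary~\ref{cor:tpclosure} with $J=P_n^2$, where the smaller-$\beta$ branch is vacuous because $\beta$ is already $0$; you close the resulting path with $\{v_{n-1},v_n\}$, identify the cycle as $K+\{v_1,v_2\}$ via Lemma~\ref{lem:unique}, and rotate the missing edge to $\{v_1,v_2\}$.
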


\begin{proof}
Let $H$ be a Hamiltonian path of $P_n^2$. Apply Corollary~\ref{cor:tpclosure} with $J=P_n^2$. Its backbites add edges of $P_n^2$, so every path they produce is a
Hamiltonian path of $P_n^2$, and no removed edge is bad. Thus the smaller-$\beta$ branch
cannot occur, and Corollary~\ref{cor:tpclosure} yields a sequence of backbites after
which the current path is a Hamiltonian path $H'$ of $P_n^2$ with endpoints $v_{n-1}$
and $v_n$. Adding the edge $\{v_{n-1},v_n\}$ to $H'$ yields a Hamiltonian
cycle of $P_n^2$, which by Lemma~\ref{lem:unique} is $C=K+\{v_1,v_2\}$. So
$H'=C-\{v_{n-1},v_n\}$. Rotations of $C$ move the missing edge to $\{v_1,v_2\}$, and
$C-\{v_1,v_2\}=K$.
\end{proof}

\medskip

\noindent\emph{Proof of Theorem~\ref{thm:main2}.} By Corollary~\ref{cor:tpfer}, applied
with $J=G$, backbites reconfigure every Hamiltonian path of $G$ into a
Hamiltonian path of $P_n^2$. By Proposition~\ref{prop:reachK}, further backbites reach
$K$. Every Hamiltonian path of $G$ thus reaches $K$, and $R(G)$ is connected.

We record the bound on the number of backbites. Reaching a Hamiltonian path of $P_n^2$
costs $O(n^2)$ backbites: each of at most $n-1$ decreases of $\beta$ uses one
application of Corollary~\ref{cor:tpclosure}, itself $O(n)$ backbites, followed by a
closing and rotation pass of $O(n)$ backbites. Proposition~\ref{prop:reachK} costs
$O(n)$ backbites: one application of Corollary~\ref{cor:tpclosure} and at most $n$
rotations. So any Hamiltonian path of $G$ reaches $K$ in $O(n^2)$ backbites, and,
backbites being invertible, any two Hamiltonian paths of $G$ are reconfigured into one
another by $O(n^2)$ backbites. \hfill$\qed$

\noindent The $O(n^2)$ bound of Theorem~\ref{thm:main2} counts backbites crudely;
determining the exact worst-case number is left open. For $P_n^2$ itself,
Proposition~\ref{prop:reachK} gives $\operatorname{diam}R(P_n^2)=O(n)$.

\section{Proof of Corollary~\ref{cor:densities}}\label{sec:sources}

For a cyclic ordering
$v_1,\dots,v_n$ of a finite set, the \emph{square} of the cycle $v_1\cdots v_nv_1$,
denoted $C_n^2$, is the graph on $\{v_1,\dots,v_n\}$ whose edges join the vertices
whose indices differ by $1$ or $2$ modulo $n$.

\medskip

\noindent\emph{Proof of Corollary~\ref{cor:densities}.}
If $v_1\cdots v_n$ is a Hamiltonian path of $F$, then $v_iv_{i+1},v_iv_{i+2}\in E(F^2)$, so
that ordering is a $P_n$-ordering of $F^2$, and Theorem~\ref{thm:main2} applies. For the
three listed conditions:
\begin{enumerate}[label=,leftmargin=1.2em,topsep=2pt,itemsep=3pt]
\item[(a)] By Fan--Kierstead \cite{FK}, every graph with
$\delta(G)\ge\lceil\frac{2n-1}3\rceil$ contains a spanning squared Hamiltonian path; apply
Theorem~\ref{thm:main2}.
\item[(b)] With $\gcd(a,n)=1$, the ordering $0,a,2a,\dots,(n-1)a$ visits all of
$\{0,\dots,n-1\}$; consecutive terms differ by $a\in S$ and terms two apart by $2a\in S$, so
this ordering spans $C_n^2\supseteq P_n^2$; apply Theorem~\ref{thm:main2}.
\item[(c)] By Kahn--Narayanan--Park \cite{KNP}, there is a constant $c>0$ such that, if
$p\ge c/\sqrt n$, then the random graph $G(n,p)$ with high probability contains the square of a
Hamilton cycle, hence a spanning copy of $P_n^2$; Theorem~\ref{thm:main2} applies to every
graph containing one.
\end{enumerate}
\hfill$\qed$

\section{Graphs with a disconnected reconfiguration graph}\label{sec:disconnect}

It is natural to ask which hypotheses weaker than that of Theorem~\ref{thm:main2}
force $R(G)$ to be connected. Edge density alone is not enough. Fix $t\ge2$ and consider
the graph $H_t$ obtained from the complete graph on the $t+2$ vertices
$a,b,h_1,\dots,h_t$ by deleting the edge $ab$ and adding two vertices $x$ and $y$,
with $x$ joined to $a$ and $y$ joined to $b$. Every Hamiltonian path of $H_t$ runs from $x$ to $y$ and admits no backbite, so
$R(H_t)$ has no edges. See Figure~\ref{fig:amp}.
Since $t\ge2$, there is more than one Hamiltonian path, and
$R(H_t)$ is disconnected. Yet $H_t$ has only $2t+4$
fewer edges than the complete graph on $t+4$ vertices, so its edge density tends to
one. The
two vertices of degree one force this, but they are not needed for disconnection. A
small grid graph with disconnected $R(G)$, the union of two $4\times4$ squares offset by
one unit, is given in \cite{Kaz}. The construction below reaches, at every order, every
connectivity and minimum degree up to $\lfloor n/2\rfloor-3$.

\begin{theorem}[Disconnection at every connectivity]\label{thm:amp}
For every $n\ge8$ and every $1\le m\le\lfloor n/2\rfloor-3$ there is a graph $G$ on
$n$ vertices with $\kappa(G)=\lambda(G)=\delta(G)=m$ whose reconfiguration graph
$R(G)$ is disconnected, with at least $(n-2m-4)!$ components.
\end{theorem}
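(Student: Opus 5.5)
The plan is to generalise the graph $H_t$ above. In $H_t$, the two pendant vertices pin the endpoints of every Hamiltonian path, so no backbite is available. I would replace each pendant vertex by a gadget of minimum degree $m$ that still pins one endpoint of every Hamiltonian path inside it. I would not try to forbid all backbites. Instead I would show that every backbite stays inside a gadget, so the order in which a path traverses a large central clique never changes. That order is then an invariant of the components of $R(G)$.

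\emph{Construction.} Take disjoint sets $X=\{x_1,\dots,x_m\}$ and $A=\{a_1,\dots,a_m\}$, and join $X$ completely to $A$, with $X$ independent. Take a symmetric pair $Y,B$ on the other side. Add a clique $Q$ on the remaining $t=n-4m$ vertices (or fewer, see below), and join every vertex of $A\cup B$ to every vertex of $Q$. Then every vertex of $X\cup Y$ has degree $m$, and all other degrees are at least $m$, so $\delta=m$. Removing $A$ separates $X$ from the rest, so $\kappa\le m$. Since $\kappa\le\lambda\le\delta=m$, it remains to show $\kappa\ge m$, which I would do by exhibiting $m$ internally disjoint paths between any two vertices using the complete bipartite joins.

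\emph{Structure of Hamiltonian paths.} This is a counting argument. The set $X$ is independent and $N(X)=A$ with $|A|=|X|$. Suppose a Hamiltonian path has no endpoint in $X$. Then every $x_i$ is interior, so its two path-neighbours lie in $A$. The $x$'s are then separated and flanked by distinct $A$-vertices, which needs $m+1$ of them; this is impossible. So exactly one endpoint lies in $X$, and symmetrically one lies in $Y$. Moreover the tightness of the count forces the path to begin $x\,a\,x\,a\cdots x\,a$ and exhaust $X\cup A$. It then crosses into $Q$, traverses $Q$, and ends $b\,y\cdots b\,y$. A backbite at an endpoint $x_i$ adds an edge $x_ia_j$. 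By Observation~\ref{obs:blockrev} it reverses a prefix ending just before $a_j$, and that prefix lies inside the $X\cup A$ segment. So the new path has the same $Q$-segment, and the symmetric statement holds at $Y$. Hence the sequence of $Q$-vertices, read in the direction from the $X$-end to the $Y$-end, is invariant under backbites. This is well defined despite reversal, because the two ends are distinguishable. All $|Q|!$ orders are realised, since $Q$ is a clique fully joined to $A\cup B$. This gives at least $|Q|!$ components.

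\emph{Main obstacle.} The main obstacle is the arithmetic. The naive gadget uses $2m$ vertices per side, leaving $|Q|=n-4m$. That is too small to give $(n-2m-4)!$ and fails entirely for $m$ near $n/2$. I would therefore shrink the gadgets to about $m+2$ vertices each. First I would try letting $X$ have only two vertices, each joined to all of an $m$-set $A$ that is itself joined to $Q$. Alternatively, $A$ could overlap $Q$, so that $A$ counts towards the clique but pendant-like vertices of degree $m$ still force an endpoint. The task is then to recheck two things. First, that endpoints are still forced into the gadgets. Second, that a backbite at such an endpoint can never pull a $Q$-vertex across, which may need a slightly coarser invariant, such as the order of $Q\setminus A$. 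With gadgets of size $m+2$, the free part of $Q$ has $n-2m-4$ vertices, which matches the claimed bound. I expect the delicate step to be verifying this invariant once $A$ meets the clique.
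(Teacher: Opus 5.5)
There is a genuine gap: the only construction you verify does not prove the statement, and the repair is left as a plan. With two gadgets of $2m$ vertices each, your invariant certifies only $|Q|!=(n-4m)!$ components. That is strictly fewer than $(n-2m-4)!$ for every $m\ge3$, and the construction breaks down entirely once $4m\ge n$, although the theorem allows $m$ up to $\lfloor n/2\rfloor-3$. Also, $\kappa=m$ needs $|Q|\ge m$, since $Q$ separates $X\cup A$ from $Y\cup B$. So as written it covers only $m\le2$.

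The concrete repair, $|X|=2$ with $|A|=m$, destroys the pinning. Your count relied on $|N(X)|\le|X|$, i.e.\ deleting $A$ leaves $|A|+1$ components. With $|X|=2<|A|$, the $x$'s can be interior, as in $\cdots a_1x_1a_2\cdots a_3x_2a_4\cdots$. Both endpoints can then lie in $Q$, and a backbite between two clique vertices reorders $Q$. More generally, every $x$ has degree at least $m$, so pinning by this count needs $|X|\ge|N(X)|\ge m$, and each gadget costs $2m$ vertices. Letting $A$ sit inside $Q$ does not recover these vertices, because backbites at $x$ permute $A$ (e.g.\ $bb(x_1,a_3)$ on $x_1a_1x_2a_2x_3a_3\cdots$ gives $x_3a_2x_2a_1x_1a_3\cdots$). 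The invariant order then lives on $Q\setminus(A\cup B)$, which again has $n-4m$ vertices. This two-separate-gadget route naturally lands at $n-4m$, not $n-2m-4$.

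The idea you are missing is to keep the pinning at degree one inside a core, and to raise $\delta$ and $\kappa$ for the whole graph with one shared block of $2m$ vertices. The paper takes $H_t$ itself: the pendant vertices $x,y$ have unique core neighbours $a,b$. It adds an independent set $S$ of $m$ vertices joined to all of $V(H_t)\cup Z$, where $Z$ consists of $m$ new vertices with $N(z)=S$. Deleting $S$ leaves $m+1$ components. Hence every Hamiltonian path has the form $R_0\,s\,R_1\cdots s\,R_m$, where one $R_i$ is a Hamiltonian path $x\,a\cdots b\,y$ of $H_t$ and the others are single $z$'s. Endpoints therefore lie in $Z\cup\{x,y\}$, and every backbite must target a vertex of $S$: the only non-$S$ neighbour of $x$ is $a$, which is already its path-neighbour. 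So both the added and the removed edge meet $S$, and the subpath on $V(H_t)$ is invariant. This gives $t!$ components with $t=n-2m-4$. Finally, $\kappa=\lambda=\delta=m$ follows from the spanning $K_{m,n-m}$ together with $\deg(z)=m$.
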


The graph $G$ of Theorem~\ref{thm:amp} is $A_m(H_t)$, where $A_m(H)$ is defined as
follows. For a connected graph $H$ and $m\ge1$, let $S=\{s_1,\dots,s_m\}$ and
$Z=\{z_1,\dots,z_m\}$ be disjoint sets of new vertices. The graph $A_m(H)$ is the
complete bipartite graph with parts $S$ and $V(H)\cup Z$, together with the edges of
$H$. That is,
\[
V(A_m(H))=V(H)\cup S\cup Z,\qquad
E(A_m(H))=E(H)\cup\{\,uv:\ u\in S,\ v\in V(H)\cup Z\,\}.
\]
So $S$ and $Z$ are independent sets, the vertices of $Z$ have all their neighbours in
$S$, and $A_m(H)-S$ is the disjoint union of $H$ and the $m$ isolated vertices
$z_1,\dots,z_m$. The order of $A_m(H)$ is $|V(H)|+2m$. Figure~\ref{fig:amp} shows
$A_2(H_2)$.

\begin{figure}[ht]
\centering
\begin{tikzpicture}[
  v/.style={circle, draw=black, fill=black, inner sep=1.5pt},
  he/.style={black, line width=0.9pt},
  je/.style={black!35, line width=0.4pt}
]
\node[v,label=above:{$x$}]   (x)  at (0,3) {};
\node[v,label=above:{$a$}]   (a)  at (1.3,3) {};
\node[v,label=above:{$h_1$}] (h1) at (2.5,3.7) {};
\node[v,label={[label distance=-2pt]below right:{$h_2$}}] (h2) at (2.5,2.5) {};
\node[v,label=above:{$b$}]   (b)  at (3.7,3) {};
\node[v,label=above:{$y$}]   (y)  at (5.0,3) {};
\node[v,label=left:{$s_1$}]  (s1) at (1.7,1.3) {};
\node[v,label=right:{$s_2$}] (s2) at (3.3,1.3) {};
\node[v,label=below:{$z_1$}] (u1) at (1.5,0) {};
\node[v,label=below:{$z_2$}] (u2) at (3.5,0) {};
\foreach \t in {x,a,h1,h2,b,y,u1,u2}{
  \draw[je] (s1) -- (\t);
  \draw[je] (s2) -- (\t);
}
\draw[he] (x)--(a); \draw[he] (a)--(h1); \draw[he] (a)--(h2);
\draw[he] (h1)--(h2); \draw[he] (h1)--(b); \draw[he] (h2)--(b); \draw[he] (b)--(y);
\node at (-1.3,3.1) {$H_2$};
\node at (-1.3,1.3) {$S$};
\node at (-1.3,0) {$Z$};
\end{tikzpicture}
\caption{The graph $A_2(H_2)$, of order $10$. The edges of $H_2$ are black and the
edges from $S$ are grey. Deleting $S$ leaves $H_2$ and the two isolated vertices
$z_1$ and $z_2$.}
\label{fig:amp}
\end{figure}

\begin{observation}[Path structure]\label{obs:ampstruct}
Let $H$ be a connected graph, let $m\ge1$, and let $P$ be a Hamiltonian path of
$A_m(H)$. Then $P-S$ has exactly $m+1$ subpaths, namely a Hamiltonian path of $H$ and
the $m$ single-vertex paths $z_1,\dots,z_m$. Moreover no vertex of $S$ is an endpoint
of $P$.
\end{observation}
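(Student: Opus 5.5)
The plan is a counting argument on the pieces of $P-S$: an upper bound coming from the fact that $P$ is a path, matched by a lower bound coming from the fact that $Z$ and $V(H)$ are separated once $S$ is deleted.

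\emph{Upper bound.} Deleting a set $X$ of vertices from a path leaves at most $|X|+1$ subpaths. Equality holds exactly when no vertex of $X$ is an endpoint of the path and no two vertices of $X$ are consecutive on it. To see this, each deleted vertex adds at most one to the number of pieces. A deleted vertex fails to add one if it is an endpoint, or if it sits next to a vertex already deleted. Applied to $P$ and $X=S$, this shows that $P-S$ has at most $m+1$ subpaths.

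\emph{Lower bound.} Every subpath of $P-S$ is a connected subgraph of $A_m(H)-S$. That graph is the disjoint union of $H$ and the isolated vertices $z_1,\dots,z_m$. So each subpath lies entirely inside $V(H)$ or is a single vertex $z_i$. Every $z_i$ lies on $P$, so each of the $m$ singletons $z_1,\dots,z_m$ is one of the subpaths. The set $V(H)$ is nonempty and must be covered by at least one further subpath. This gives at least $m+1$ subpaths.

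\emph{Conclusion.} The two bounds agree, so $P-S$ has exactly $m+1$ subpaths. These are the $m$ singletons $z_i$ and a single subpath $Q$ with vertex set $V(H)$. The edges of $Q$ are edges of $A_m(H)$ with both ends in $V(H)$, and such edges are edges of $H$. Hence $Q$ is a Hamiltonian path of $H$. Finally, the equality case of the upper bound shows that no vertex of $S$ is an endpoint of $P$.

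The only step needing any care is the equality characterisation in the upper bound. It is elementary, and I will state it together with the one-line reason above.
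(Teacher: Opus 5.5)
Your proof is correct and follows essentially the same route as the paper. You use the same $m+1$ upper bound from the path structure, the same matching lower bound from the $m+1$ components of $A_m(H)-S$, and the same observation that an endpoint in $S$ would cost a piece; the only cosmetic difference is that you package the endpoint claim as the equality case of the upper bound, whereas the paper reruns the count directly.
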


\begin{proof}
Removing $m$ vertices from a path leaves at most $m+1$ subpaths. Each subpath of
$P-S$ is connected in $A_m(H)-S$ and so lies in a single component. $P$ is
Hamiltonian, so each of the $m+1$ components of $A_m(H)-S$ contains a subpath of
$P-S$. Hence $P-S$ has exactly $m+1$ subpaths, one in each component. The subpath in
the component $H$ spans $V(H)$, and its edges lie in $E(H)$, so it is a Hamiltonian
path of $H$. The subpath in $\{z_i\}$ is the single vertex $z_i$.

Suppose some vertex of $S$ is an endpoint of $P$. Deleting it does not split the
path, and deleting each of the remaining $m-1$ vertices of $S$ increases the number
of subpaths by at most one, so $P-S$ has at most $m$ subpaths, a contradiction. So no
vertex of $S$ is an endpoint of $P$.
\end{proof}

\noindent We write $P_H$ for the Hamiltonian path of $H$ contained in $P$, given by
Observation~\ref{obs:ampstruct}.

\begin{observation}[Hamiltonian paths of $H_t$]\label{obs:htpaths}
Let $t\ge2$. Then $H_t$ has exactly $t!$ Hamiltonian paths, and every one runs from
$x$ to $y$, with second vertex $a$ and second-to-last vertex $b$.
\end{observation}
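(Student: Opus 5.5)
The plan is to exploit the two degree-one vertices $x$ and $y$ and then count the orderings of the clique part. Recall that $H_t$ has vertex set $\{x,y,a,b,h_1,\dots,h_t\}$, with $x$ adjacent only to $a$ and $y$ adjacent only to $b$. On $\{a,b,h_1,\dots,h_t\}$ the graph is complete except that $ab$ is missing.

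\textbf{Endpoints and the second vertices.} In any Hamiltonian path, a vertex of degree one must be an endpoint. Since $n=t+4\ge 4$, the two endpoints are exactly $x$ and $y$, so every Hamiltonian path runs from $x$ to $y$. The neighbour of $x$ on the path is its unique neighbour $a$, and similarly the neighbour of $y$ is $b$. So, read from $x$, every Hamiltonian path has the form $x,a,\sigma,b,y$, where $\sigma$ is an ordering of $\{h_1,\dots,h_t\}$.

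\textbf{Counting.} We must check which sequences $x,a,\sigma,b,y$ are Hamiltonian paths. Every consecutive pair is an edge of $H_t$:
\begin{itemize}
\item $xa$ and $by$ are edges by construction;
\item $a$ is joined to the first vertex of $\sigma$, and $b$ to the last, because they lie in the near-complete part;
\item consecutive $h_i,h_j$ are adjacent for the same reason.
\end{itemize}
Note that $\sigma$ is nonempty since $t\ge2$, so the missing edge $ab$ is never needed. Hence every one of the $t!$ orderings $\sigma$ gives a Hamiltonian path.

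Distinct orderings give distinct paths. A path and its reversal are identified, but each path is read canonically starting from $x$, which fixes the orientation. Therefore there are exactly $t!$ Hamiltonian paths.

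\textbf{Expected difficulty.} There is no real obstacle here. The only care needed is to justify that degree-one vertices are endpoints (an interior vertex of a path has degree $2$ on the path) and to treat the reversal identification correctly when counting.
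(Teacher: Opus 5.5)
Your proof is correct: the degree-one vertices $x$ and $y$ force the endpoints and the second and second-to-last vertices, every ordering of $h_1,\dots,h_t$ fills the middle without needing the missing edge $ab$, and reading each path from $x$ handles the identification with its reversal. The paper states this observation without proof, and your argument is the routine verification it leaves implicit.
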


\begin{lemma}[The subpath on $H_t$ is preserved]\label{lem:ampmap}
Let $t\ge2$, let $m\ge1$, and let $P$ be a Hamiltonian path of $A_m(H_t)$. Then every
backbite of $P$ leaves $E(P_{H_t})$ unchanged.
\end{lemma}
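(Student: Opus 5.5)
We argue directly from Observations~\ref{obs:ampstruct} and~\ref{obs:htpaths}. Let $P$ be a Hamiltonian path of $A_m(H_t)$. By Observation~\ref{obs:ampstruct}, $P-S$ consists of $P_{H_t}$ and the singletons $z_1,\dots,z_m$, and no vertex of $S$ is an endpoint of $P$. Since the $m$ vertices of $S$ separate $P$ into $m+1$ nonempty pieces, every vertex of $S$ is interior, no two vertices of $S$ are consecutive on $P$, and the pieces alternate with the vertices of $S$. Hence both endpoints of $P$ are endpoints of pieces. Each piece is either some $z_i$ or $P_{H_t}$, and by Observation~\ref{obs:htpaths} $P_{H_t}$ runs from $x$ to $y$. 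So every endpoint of $P$ lies in $Z\cup\{x,y\}$.

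Now consider a backbite $bb(u,v)$ at an endpoint $u$. It adds $uv\in E(A_m(H_t))$ and removes an edge $vw$ of $P$. The first step is to check that $uv$ is not an edge of $H_t$ lying inside $P_{H_t}$ that would alter it. If $u=z_i$, then $v\in S$. If $u=x$, then $v\in S\cup\{a\}$, and $v=a$ is impossible because $a$ is the neighbour of $x$ on $P_{H_t}$. Thus, if $u=x$ is an endpoint of $P$, then $xa\in E(P)$ and $x$ has no other neighbour on $P$, so $a$ is excluded as a backbite target. The case $u=y$ is symmetric, with $b$ in place of $a$. Hence in every case $v\in S$, and the added edge joins $S$ to $V(H_t)\cup Z$, so it is not in $E(H_t)$.

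The second step concerns the removed edge $vw$. Since $v\in S$ and $S$ is independent, $w\in V(H_t)\cup Z$, so $vw\notin E(H_t)$. Therefore neither the added nor the removed edge lies in $E(H_t)$. Every edge of $P_{H_t}$ is an edge of $P$ lying in $E(H_t)$. Conversely, by Observation~\ref{obs:ampstruct} applied to the new path $P'$, the edges of $P'$ inside $E(H_t)$ are exactly $E(P'_{H_t})$. It follows that $E(P'_{H_t})=E(P')\cap E(H_t)=E(P)\cap E(H_t)=E(P_{H_t})$.

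The main delicate point is the endpoint analysis: establishing that the endpoints of $P$ lie in $Z\cup\{x,y\}$ and that $x$ and $y$ cannot use their only $H_t$-edge as a target. This relies on $x$ and $y$ having degree one in $H_t$, and on the fact from Observation~\ref{obs:htpaths} that $x$ and $y$ are the ends of $P_{H_t}$.
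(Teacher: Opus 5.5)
Your proof is correct and follows the paper's argument: you place the endpoints of $P$ in $Z\cup\{x,y\}$ via Observations~\ref{obs:ampstruct} and~\ref{obs:htpaths}, force the target $v$ into $S$ (using that $a$, resp.\ $b$, is the path-neighbour of $x$, resp.\ $y$), and conclude that neither the added nor the removed edge lies in $E(H_t)$. Your explicit identity $E(P_{H_t})=E(P)\cap E(H_t)$ spells out the final step that the paper leaves implicit. The appeal to the independence of $S$ for the removed edge is unnecessary, since $v\notin V(H_t)$ already suffices.
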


\begin{proof}
Apply a backbite at the endpoint $u$ of $P$, with target $v$. If $v$ lies in $S$, then
neither the added edge nor the removed one has both ends in $V(H_t)$, and so
$bb(u,v)$ leaves $E(P_{H_t})$ unchanged. Thus it is enough to show that $v$ lies in
$S$.
By Observation~\ref{obs:ampstruct}, $u$ is not on a vertex of $S$, so $u$ lies in $Z$
or $u$ is an endpoint of $P_{H_t}$. If $u$ lies in $Z$, then all its neighbours lie in
$S$, so $v\in S$. Otherwise, by Observation~\ref{obs:htpaths}, $u$ is on $x$ or $y$.
Suppose $u$ is on $x$. The neighbour of $x$ in $P$ is its neighbour $a$ in $P_{H_t}$,
and $N(x)=\{a\}\cup S$. It follows that $v\neq a$, and so $v$ must lie in $S$. A similar argument works if $u$ is on $y$.
\end{proof}

\begin{corollary}[Component count]\label{cor:ampmap}
Let $t\ge2$ and $m\ge1$. Then $R(A_m(H_t))$ has at least $t!$ components.
\end{corollary}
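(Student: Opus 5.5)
The plan is to use $E(P_{H_t})$ as an invariant on the vertices of $R(A_m(H_t))$. By Observation~\ref{obs:ampstruct}, every Hamiltonian path $P$ of $A_m(H_t)$ determines a Hamiltonian path $P_{H_t}$ of $H_t$. Define the map $\phi$ from Hamiltonian paths of $A_m(H_t)$ to edge sets of Hamiltonian paths of $H_t$ by $\phi(P)=E(P_{H_t})$. Note that $\phi$ is well defined on vertices of $R$, since reversing $P$ does not change $E(P_{H_t})$. By Lemma~\ref{lem:ampmap}, $\phi$ is constant along every edge of $R(A_m(H_t))$, and therefore constant on each connected component. The number of components is thus at least the size of the image of $\phi$.

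To finish, I will show that $\phi$ is onto the set of $t!$ Hamiltonian paths of $H_t$ from Observation~\ref{obs:htpaths}. These are distinct as edge sets: a Hamiltonian path is determined by its edge set up to reversal, and all of them run from $x$ to $y$, so reversal causes no identifications. Given any Hamiltonian path $Q$ of $H_t$, say $Q=x\,a\cdots b\,y$, I will build a Hamiltonian path of $A_m(H_t)$ containing $Q$ as a subpath:
\[
z_1\,s_1\,z_2\,s_2\,\cdots\,z_m\,s_m\,Q .
\]
Each edge $z_is_i$ and $s_iz_{i+1}$ exists because $S$ is completely joined to $Z$. The edge $s_mx$ exists because $S$ is completely joined to $V(H_t)$. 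The remaining edges are edges of $Q\subseteq H_t$. For this path $P$, the subpath $P_{H_t}$ is exactly $Q$, so $\phi(P)=E(Q)$. The image of $\phi$ therefore has size $t!$, and $R(A_m(H_t))$ has at least $t!$ components.

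There is no real obstacle. The only care needed is confirming that distinct Hamiltonian paths of $H_t$ give distinct edge sets, which is immediate since an edge set determines the path up to reversal.
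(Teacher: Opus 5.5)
Your proof is correct and is essentially the paper's argument: $E(P_{H_t})$ is invariant under backbites by Lemma~\ref{lem:ampmap}, and distinct Hamiltonian paths of $H_t$ have distinct edge sets. Each of the $t!$ Hamiltonian paths of $H_t$ is realized by attaching an alternating $S$/$Z$ chain to it. The paper appends the chain after $y$ as $(Q,s_1,z_1,\dots,s_m,z_m)$, whereas you prepend $z_1s_1\cdots z_ms_m$ before $x$; this difference is immaterial.
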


\begin{proof}
By Lemma~\ref{lem:ampmap}, $E(P_{H_t})$ remains unchanged under backbites, so it is
the same for all Hamiltonian paths in a component of $R(A_m(H_t))$. Every Hamiltonian
path $Q$ of $H_t$ extends to the Hamiltonian path
$(Q,\;s_1,\;z_1,\;s_2,\;z_2,\;\dots,\;s_m,\;z_m)$ of $A_m(H_t)$, whose subpath on
$V(H_t)$ is $Q$. Distinct paths have distinct edge sets, and $H_t$ has exactly $t!$
Hamiltonian paths by Observation~\ref{obs:htpaths}, so there are $t!$ possibilities
for $E(P_{H_t})$, and each of them occurs. Hence $R(A_m(H_t))$ has at least $t!$
components.
\end{proof}

\begin{proposition}[Connectivity and degree of $A_m(H)$]\label{prop:ampconn}
Let $H$ be a connected graph and let $m\ge1$. Then
$\kappa(A_m(H))=\lambda(A_m(H))=\delta(A_m(H))=m$.
\end{proposition}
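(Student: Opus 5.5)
The plan is to use the standard chain $\kappa\le\lambda\le\delta$, which holds for every graph. It then suffices to show $\delta(A_m(H))\le m$ and $\kappa(A_m(H))\ge m$.

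\emph{Upper bound.} Each $z_i\in Z$ is adjacent exactly to the $m$ vertices of $S$, so $\deg z_i=m$ and $\delta(A_m(H))\le m$. Alternatively, $S$ is a vertex cut of size $m$, since $A_m(H)-S$ has the $m+1\ge2$ components $H,\{z_1\},\dots,\{z_m\}$; this gives $\kappa\le m$ directly.

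\emph{Lower bound on $\kappa$.} I would use the definition of $\kappa$ as the least number of vertices whose deletion disconnects the graph or leaves a single vertex. The order is $|V(H)|+2m\ge m+2$, so deleting fewer than $m$ vertices never leaves a single vertex, and it remains to check connectivity. Let $X$ be a set of at most $m-1$ vertices. Then some $s\in S$ survives. The vertex $s$ is adjacent to every vertex of $V(H)\cup Z$, so every surviving vertex of $V(H)\cup Z$ lies in the component of $s$. Any other surviving vertex $s'\in S$ is also adjacent to every vertex of $V(H)\cup Z$. At most $m-1$ of these $|V(H)|+m\ge m+1$ vertices are deleted, so $s'$ keeps a surviving neighbour there, which is joined to $s$. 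Hence $A_m(H)-X$ is connected and $\kappa\ge m$.

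Combining the two bounds gives $m\le\kappa\le\lambda\le\delta\le m$, so all three equal $m$. Note that connectivity of $H$ is not needed here: it is used only in Observation~\ref{obs:ampstruct}. I do not expect a real obstacle. The only delicate step is the size bookkeeping, namely confirming that $|V(H)|+m>m-1$ so that each surviving $s'$ retains a neighbour, and noting that $\kappa$ is taken with the complete-graph convention. Both hold because $|V(H)|\ge1$.
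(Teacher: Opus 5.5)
Your proof is correct and follows the paper's route: the chain $\kappa\le\lambda\le\delta$, the degree-$m$ vertices of $Z$ for the upper bound, and a separate argument for $\kappa\ge m$. The only difference is in that last step. The paper cites $\kappa(K_{m,|V(H)|+m})=m$ for the spanning complete bipartite subgraph and uses that adding edges cannot decrease $\kappa$. Your direct argument uses only the bipartite edges, so it proves that fact inline.
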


\begin{proof}
Deleting the edges of $H$ from $A_m(H)$ leaves the complete bipartite graph with
parts $S$ and $V(H)\cup Z$, for which $\kappa=\lambda=\delta=m$. Adding edges
decreases none of the three, so $\kappa(A_m(H))\ge m$. Every vertex of $Z$ has
neighbourhood $S$, so $\delta(A_m(H))=m$. Since $\kappa\le\lambda\le\delta$, all three
equal $m$.
\end{proof}

\begin{proof}[Proof of Theorem~\ref{thm:amp}]
Let $n\ge8$ and $1\le m\le\lfloor n/2\rfloor-3$, and set $t=n-2m-4$, so that
$t\ge2$. Let $G=A_m(H_t)$, of order
$t+4+2m=n$. By Proposition~\ref{prop:ampconn}, $\kappa(G)=\lambda(G)=\delta(G)=m$.
By Corollary~\ref{cor:ampmap}, $R(G)$ has at least $t!=(n-2m-4)!$ components, and
$t!\ge2$, so $R(G)$ is disconnected.
\end{proof}

\begin{remark}\label{rem:three}
Every backbite of $A_m(H)$ either leaves $E(P_H)$ unchanged or induces a single
backbite on $P_H$, and every Hamiltonian path of $H$ extends to one of $A_m(H)$ as in
the proof of Corollary~\ref{cor:ampmap}. So for any connected graph $H$, $R(A_m(H))$
is disconnected whenever $R(H)$ is.
\end{remark}

\noindent Fixing $j\ge2$ and letting $m$ grow, the graphs $A_m(H_j)$ have
$\delta/n\to\frac12$ and at least $j!$ components.

\begin{conjecture}\label{conj:half}
Let $G$ be a graph on $n$ vertices with minimum degree at least $n/2$. Then the backbite
reconfiguration graph $R(G)$ is connected, that is, any two Hamiltonian paths of $G$ can be reconfigured into one another
by a sequence of backbite moves.
\end{conjecture}

\section*{Acknowledgements}
I thank Neal Madras for a careful reading of the manuscript and for comments that
improved the exposition, and Alan Sokal for posing the question. This research was
supported by an NSERC Discovery Grant to Neal Madras (RGPIN-2025-06291).

\end{document}